\begin{document}\newtheorem{thm}{Theorem}
\newtheorem{cor}[thm]{Corollary}
\newtheorem{lem}{Lemma}
\theoremstyle{remark}\newtheorem{rem}{Remark}
\theoremstyle{definition}\newtheorem{defn}{Definition}
%%%%%%%%%%%%%%%%%%%%%%%%%
\title{Banach space valued $H^p$ spaces with $A_p$ weight}
\author{Sakin Demir\\
Agri Ibrahim Cecen University\\ 
Faculty of Education\\
Department of Basic Education\\
04100 A\u{g}r{\i}, Turkey\\
e-mail: sakin.demir@gmail.com}
\date{12.09.2022}
\maketitle
\renewcommand{\thefootnote}{}

\footnote{2020 \emph{Mathematics Subject Classification}: Primary 42B30  ; Secondary 42B20.}

\footnote{\emph{Key words and phrases}:Banach space valued $H^p$ with $A_p$  weight .}

\renewcommand{\thefootnote}{\arabic{footnote}}
\setcounter{footnote}{0}

\maketitle
\begin{abstract}In this research we introduce the Banach space valued $H^p$ spaces with $A_p$ weight, and prove the following results:\\
\indent
Let $\mathbb{A}$ and $\mathbb{B}$ Banach spaces, and $T$ be a convolution operator mapping $\mathbb{A}$-valued functions into $\mathbb{B}$-valued functions, i.e.,
$$Tf(x)=\int_{\mathbb{R}^n}K(x-y)\cdot f(y)\, dy,$$
where $K$ is a strongly measurable function defined on $\mathbb{R}^n$ such that $\|K(x)\|_{\mathbb{B}}$ is locally integrable away from the origin. Suppose that $w$ is a positive weight function defined on $\mathbb{R}^n$, and that 
\begin{enumerate}[label=\upshape(\roman*), leftmargin=*, widest=iii]
\item For some $q\in [1, \infty ]$, there exists a positive constant $C_1$ such that
$$\int_{\mathbb{R}^n}\|Tf(x)\|^q_{\mathbb{B}}w(x)\, dx\leq C_1\int_{\mathbb{R}^n}\|f(x)\|_{\mathbb{A}}^q w(x)\,dx$$
for all $f\in L^q_{\mathbb{A}}(\mathbb{R}^n)$.
\item There exists a positive constant $C_2$ independent of $y\in\mathbb{R}^n$ such that
$$\int_{|x|>2|y|}\|K(x-y)-K(x)\|_{\mathbb{B}}\, dx<C_2.$$
\end{enumerate}
Then there exists a positive constant $C_3$ such that
$$\|Tf\|_{L^1_{\mathbb{B}}(w)}\leq C_3\|f\|_{H^1_{\mathbb{A}}(w)}$$
 for all $f\in H^1_{\mathbb{A}}(w)$.\\
\indent
 Let $w\in A_1$. Assume that $K\in L_{\rm{loc}}(\mathbb{R}^n\backslash \{0\})$ satisfies
$$\|K\ast f\|_{L^2_{\mathbb{B}}(w)}\leq C_1\|f\|_{L^2_{\mathbb{A}}(w)}$$
and

$$\int_{|x|\geq C_2|y|}\|K(x-y)-K(x)\|_{\mathbb{B}}w(x+h)\, dx\leq C_3w(y+h)\;\;\;(\forall y\neq 0, \forall h\in\mathbb{R}^n)
$$
for certain absolute constants $C_1$, $C_2$, and $C_3$. Then there exists a positive constant $C$ independent of $f$ such that
$$\|K\ast f\|_{L^1_{\mathbb{B}}(w)}\leq C\|f\|_{H^1_{\mathbb{A}}(w)}$$
for all $f\in H^1_{\mathbb{A}}(w)$.
\end{abstract}
\section{Introduction}
\indent

Extending $L^p$ spaces to Banach space valued $L^p$ spaces first started with the work of A. Benedek {\it{et al}}~\cite{bcp}. J. Bourgain~\cite{jb} extended some part of their results to a lattice with UMD-property. Later, the results of A. Benedek {\it{et al}}~\cite{bcp} have been reconstructed with a little more modern notations  by J. L. Rubio de Francia {\it{et al}}~\cite{jlrdffjrjlt}.\\
\indent
Obviously, Banach space valued setting is more general than the usual structure because we have a Banach space norm instead of absolute value. When a theorem can be extended from $L^p$ spaces to Banach space valued $L^p$ spaces it becomes a much more powerful theorem than its initial version. \\
\indent
Let us first recall some basic definitions and theorems from Banach space valued $L^p$ theory:\\
%%%%%%%%%%%%%%%%%%%
Let $\mathbb{B}$ be a Banach space, and $p<\infty$. By $L_{\mathbb{B}}^p=L_{\mathbb{B}}^p(\mathbb{R}^n)$ we denote the Bochner-Lebesgue space consisting of all $\mathbb{B}$-valued (strongly) measurable functions $f$ in $\mathbb{R}^n$ such that
$$\|f\|_{L_{\mathbb{B}}^p}=\left(\int_{\mathbb{R}^n}\|f(x)\|_{\mathbb{B}}^p\, dx\right)^{1/p}<\infty .$$
For $p=\infty$, norm of an element of $L^\infty (\mathbb{B})=L_{\mathbb{B}}^{\infty}(\mathbb{R}^n)$ is
$$\|f\|_{L_{\mathbb{B}}^\infty}=\textrm{ess}\sup \|f(x)\|_{\mathbb{B}}<\infty$$
and $L_{\rm{c}}^{\infty}(\mathbb{B})$ denotes the space of all compactly supported members of $L^\infty (\mathbb{B}).$\\
Let $f$ be a locally integrable $\mathbb{B}$-valued function, and $1\leq r\leq\infty$. We define the maximal functions
$$M_rf(x)=\sup_{x\in Q}\left(\frac{1}{|Q|}\int_Q\|f(y)\|^r_{\mathbb{B}}\, dy\right)^{1/r}$$
and
$$f^{\sharp}(x)=\sup_{x\in Q}\frac{1}{|Q|}\int_Q\|f(y)-f_Q\|_{\mathbb{B}}\, dy,$$
where $Q$ denotes an arbitrary cube in $\mathbb{R}^n$ and $f_Q$ is the average of $f$ over $Q$, an element of $\mathbb{B}$.\\
%%%%%%%%%%%%%%%%%%%%%%
Given a weight $w$ on $\mathbb{R}^n$, we denote by $L_{\mathbb{B}}^p(w)$ the space of all functions satisfying
$$\|f\|^p_{L^p_{\mathbb{B}}(w)}=\int_{\mathbb{R}^n}\|f(x)\|^p_{\mathbb{B}}w(x)\, dx<\infty .$$
When $p=\infty$, $L^{\infty}_{\mathbb{B}}(w)$ will be taken to mean $L^\infty_{\mathbb{B}}(\mathbb{R}^n)$ and 
$$\|f\|_{L^\infty_{\mathbb{B}}(w)}=\|f\|_{L^\infty_{\mathbb{B}}}.$$

%%%%%%%%%%%%%%%%%%%%%%%%%
\section{Banach Space Valued $H^p$ Spaces with $A_p$ Weight}
\indent

Analogous to the classical weighted Hardy spaces we can also define the weighted Hardy spaces $H^p_{\mathbb{B}}(w)$ of $\mathbb{B}$-valued functions for $p>0$. Let $\phi$ be a function in $\mathscr{S}(\mathbb{R}^n)$, the Schwartz space of rapidly decreasing smooth functions, satisfying $\int_{\mathbb{R}^n}\phi (x)\, dx=1$. Define
$$\phi_t(x)=t^{-n}\phi (x/t), \;\;\;\; t>0,\;\;\;\; x\in \mathbb{R}^n,$$
and the maximal function $f^\ast$ by
$$f^\ast (x)=\sup_{t>0}\|f\ast \phi_t (x)\|_{\mathbb{B}}.$$
Then $H^p_{\mathbb{B}}(w)$ consists of those tempered distributions $f\in \mathscr{S}^\prime (\mathbb{R}^n)$ for which $f^{\ast}\in L^p_{\mathbb{B}}(w)$ with $\|f\|_{H^p_{\mathbb{B}}(w)}=\|f^\ast\|_{L^p_{\mathbb{B}}(w)}.$\\
As in the classical case these spaces can also be characterized in terms of atoms in the following way.
\begin{defn}Let $0<p\leq 1\leq q\leq\infty $ and $p\neq q$ such that $w\in A_q$ with critical index $q_w$. Set $[\,\cdot\, ]$ the integer function. For $s\in\mathbb{Z}$ satisfying $s\geq [n(q_w/p-1)]$, a $\mathbb{B}$-valued function $a$ defined on $\mathbb{R}^n$ is called a $(p,q,s)$-atom with respect to $w$ if
\begin{enumerate}[label=\upshape(\roman*), leftmargin=*, widest=iii]
\item $a\in L^q_{\mathbb{B}}(w)$ and is supported on a cube $Q$,
\item $\|a\|_{ L^q_{\mathbb{B}}(w)}\leq w(Q)^{1/q-1/p}$,
\item $\int_{\mathbb{R}^n}a(x)x^{\alpha}\, dx=0$ for every multi-index $\alpha$ with $|\alpha|\leq s$.
\end{enumerate}
The $\mathbb{B}$-valued atom defined above is called $(p,q,s)$-atom centered at $x_0$ with respect to $w$ (or $w-(p,q,s)$-atom centered at $x_0$), where $x_0$ is the center of the cube $Q$.
\end{defn}
\begin{lem}Let $a$ be any $\mathbb{B}$-valued $w-(p,q,s)$-atom supported in a cube $Q$. Then we have
$$\int_Q\|a(x)\|^p_{\mathbb{B}}w(x)\, dx\leq 1.$$
\end{lem}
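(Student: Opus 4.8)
The plan is to reduce everything to an application of Hölder's inequality in the weighted space, using the normalization condition (ii) in the definition of an atom together with the size of the support. Since $0<p\le 1\le q$ and $p\ne q$, we always have $q>p$, so the exponents $q/p$ and its conjugate $(q/p)'=q/(q-p)$ are both well defined and lie in $(1,\infty)$; this is what makes the splitting below legitimate.

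First I would treat the case $q<\infty$. Write
$$\int_Q\|a(x)\|^p_{\mathbb{B}}w(x)\, dx=\int_Q\Big(\|a(x)\|^q_{\mathbb{B}}w(x)\Big)^{p/q}\, w(x)^{1-p/q}\, dx,$$
and apply Hölder's inequality with exponents $q/p$ and $q/(q-p)$ to obtain
$$\int_Q\|a(x)\|^p_{\mathbb{B}}w(x)\, dx\le\left(\int_Q\|a(x)\|^q_{\mathbb{B}}w(x)\, dx\right)^{p/q}\left(\int_Q w(x)\, dx\right)^{(q-p)/q}=\|a\|^p_{L^q_{\mathbb{B}}(w)}\, w(Q)^{(q-p)/q}.$$
Now I would invoke property (ii) of a $w$-$(p,q,s)$-atom, namely $\|a\|_{L^q_{\mathbb{B}}(w)}\le w(Q)^{1/q-1/p}$, so that the right-hand side is bounded by $w(Q)^{p(1/q-1/p)}\, w(Q)^{(q-p)/q}=w(Q)^{p/q-1}\, w(Q)^{1-p/q}=1$, which is exactly the claimed bound.

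For the remaining case $q=\infty$ the argument is even shorter: property (ii) then reads $\|a\|_{L^\infty_{\mathbb{B}}(w)}=\|a\|_{L^\infty_{\mathbb{B}}}\le w(Q)^{-1/p}$, and since $a$ is supported in $Q$ we simply estimate
$$\int_Q\|a(x)\|^p_{\mathbb{B}}w(x)\, dx\le\|a\|^p_{L^\infty_{\mathbb{B}}}\int_Q w(x)\, dx\le w(Q)^{-1}w(Q)=1.$$
I do not anticipate any serious obstacle here; the only point requiring a little care is keeping track of the exponents so that the powers of $w(Q)$ cancel exactly, and making sure the $q=\infty$ endpoint is handled separately since Hölder's inequality degenerates there. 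No properties of $a$ beyond (i) and (ii) — in particular not the vanishing-moment condition (iii) — are needed for this estimate.
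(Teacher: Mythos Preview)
Your proof is correct and follows essentially the same route as the paper: both apply H\"older's inequality with exponents $r=q/p$ and $r'=q/(q-p)$ to arrive at $\|a\|^p_{L^q_{\mathbb{B}}(w)}\,w(Q)^{1-p/q}$ and then invoke property~(ii) of the atom. Your treatment is slightly more careful in that you explicitly separate out the case $q=\infty$, which the paper leaves implicit.
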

\begin{proof}
Let $a$ be any $\mathbb{B}$-valued $w-(p,q,s)$-atom. It is clear that $a\in L^p_{\mathbb{B}}(w)$ and $\|a\|_{ L^p_{\mathbb{B}}(w)}\leq 1$, since by H\"older's inequality 
\begin{align*}
\int_Q\|a(x)\|^p_{\mathbb{B}}w(x)\, dx&\leq \|a^p\|_{L^r_{\mathbb{B}}(w)}\left(\int_Qw(x)\, dx\right)^{1/r^\prime}\\
&= \|a\|^p_{L^q_{\mathbb{B}}(w)}\cdot w(Q)^{1-p/q}\\
&\leq 1,
\end{align*}
where $r=q/p$ and $1/{r^\prime}=1-1/r=1-p/q$.
\end{proof}
Analog to the classical case $H^p_{\mathbb{B}}(w)$ can be characterized by $\mathbb{B}$-valued $w-(p,q,s)$-atoms.\\
We state the following few theorems without proof since their proofs are similar to the scalar case, i.e., one only needs to replace the absolute value with the $\mathbb{B}$-norm in the proofs for classical cases.
\begin{thm}\label{acharhpw}Let $w\in A_\infty$ and $0<p\leq 1$. For each $f\in H^p_{\mathbb{B}}(w)$, there exists a sequence of $\mathbb{B}$-valued $(p, \infty, N)$-atoms with respect to $w$ and a sequence $\{\lambda_i\}$ of real numbers with $\sum_j |\lambda_i|^p\leq C\|f\|^p_{H^p_{\mathbb{B}}(w)}$ such that 
$$f(x)=\sum_j\lambda_ja_j(x);\;\;\; \;\;\;(\lambda_j\in \ell^p)$$
both in the sense of distribution and in the $H^p_{\mathbb{B}}(w)$ norm.
\end{thm}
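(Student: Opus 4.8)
The plan is to run the vector-valued, weighted version of the classical Calder\'{o}n--Zygmund construction of atomic decompositions for Hardy spaces (the scalar weighted case is due to Str\"{o}mberg and Torchinsky), the only systematic changes being that every absolute value is read as $\|\cdot\|_{\mathbb{B}}$ and every integral as a Bochner integral. Fix $N=[n(q_w/p-1)]$ and pass from the radial maximal function $f^\ast$ to a grand maximal function $M_Nf(x)=\sup\{\|f\ast\varphi_t(x)\|_{\mathbb{B}}:t>0,\ \varphi\in\mathcal{A}_N\}$, where $\mathcal{A}_N$ is a fixed bounded subset of $\mathscr{S}(\mathbb{R}^n)$ controlling derivatives through order $N+1$. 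The first task is the equivalence $\|M_Nf\|_{L^p_{\mathbb{B}}(w)}\approx\|f\|_{H^p_{\mathbb{B}}(w)}$; this should be the scalar argument transcribed in the $\mathbb{B}$-norm, using only the weighted boundedness of the Hardy--Littlewood maximal operator (available from $w\in A_\infty$ via doubling and the reverse H\"{o}lder inequality) together with estimates insensitive to the target space. One consequence is that $f$ agrees a.e.\ with a locally Bochner-integrable function and that $w(\Omega_k)<\infty$ for every $k\in\mathbb{Z}$, where $\Omega_k=\{M_Nf>2^k\}$.

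Next, for each $k$ I would take a Whitney decomposition $\Omega_k=\bigcup_iQ_i^k$ into cubes of bounded overlap whose diameters are comparable to their distances from $\Omega_k^{\mathrm{c}}$, fix a smooth partition of unity $\{\eta_i^k\}$ subordinate to a fixed dilate of $\{Q_i^k\}$ with $\sum_i\eta_i^k=1$ on $\Omega_k$, and set $b_i^k=(f-P_i^k)\eta_i^k$, where $P_i^k$ is the $\mathbb{B}$-valued polynomial of degree $\le N$ determined by $\int(f-P_i^k)\eta_i^k\,x^\alpha\,dx=0$ for all $|\alpha|\le N$; the existence and uniqueness of $P_i^k$ is finite-dimensional linear algebra whose linear data happen to be Bochner integrals. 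With $g^k=f-\sum_ib_i^k$, the next step is the pointwise bound $\|g^k(x)\|_{\mathbb{B}}\lesssim 2^k$ a.e., proved exactly as in the scalar case using $M_Nf\le 2^k$ off $\Omega_k$ and local mean-oscillation estimates for $f$ against $P_i^k$ on the dilated Whitney cubes.

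Since $g^k\to f$ in $\mathscr{S}'(\mathbb{R}^n)$ as $k\to+\infty$ and $g^k\to0$ as $k\to-\infty$, I would write $f=\sum_{k\in\mathbb{Z}}(g^{k+1}-g^k)$ and regroup each difference, after the usual correction polynomials (now with $\mathbb{B}$-valued coefficients), as $\sum_iA_i^k$, where each $A_i^k$ is supported in a fixed dilate of $Q_i^k$, has vanishing moments through order $N$, and satisfies $\|A_i^k\|_{L^\infty_{\mathbb{B}}}\lesssim 2^k$. Then $\lambda_i^k=C\,2^k\,w(cQ_i^k)^{1/p}$ and $a_i^k=(\lambda_i^k)^{-1}A_i^k$ yield, for a suitable absolute $C$, a $\mathbb{B}$-valued $w$-$(p,\infty,N)$-atom, since $\|a_i^k\|_{L^\infty_{\mathbb{B}}(w)}=\|a_i^k\|_{L^\infty_{\mathbb{B}}}\le w(cQ_i^k)^{-1/p}$. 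Bounded overlap of the Whitney cubes and doubling of $w$ give $\sum_iw(cQ_i^k)\lesssim w(\Omega_k)$, hence $\sum_k\sum_i|\lambda_i^k|^p\lesssim\sum_k2^{kp}w(\Omega_k)\lesssim\|M_Nf\|_{L^p_{\mathbb{B}}(w)}^p\approx\|f\|_{H^p_{\mathbb{B}}(w)}^p$ by the layer-cake comparison; since $p\le1$ this also yields $\{\lambda_j\}\in\ell^1$ after relabelling $\{A_i^k\}$ as a single sequence $\{a_j\}$. Convergence of $\sum_j\lambda_ja_j$ to $f$ in $\mathscr{S}'$ is the telescoping just used, and convergence in $H^p_{\mathbb{B}}(w)$-norm follows from subadditivity of $\|\cdot\|_{H^p_{\mathbb{B}}(w)}^p$ together with the routine uniform bound $\|\lambda a\|_{H^p_{\mathbb{B}}(w)}\lesssim|\lambda|$ for $\mathbb{B}$-valued $w$-$(p,\infty,N)$-atoms, so the tails of $\sum_{k,i}|\lambda_i^k|^p$ control the tails of the series in $H^p_{\mathbb{B}}(w)$.

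I expect the principal obstacle to be the first task: the equivalence of the grand maximal function with the radial maximal function in the weighted $\mathbb{B}$-valued setting. It is the conceptual hinge on which everything downstream rests, and it is the step most likely to conceal a use of properties special to the scalar case; one must check that the weighted reverse-H\"{o}lder and maximal-function machinery, and the reduction of $f$ to a genuine locally Bochner-integrable function, go through unchanged. Within the construction itself the heaviest bookkeeping is the uniform size bound $\|A_i^k\|_{L^\infty_{\mathbb{B}}}\lesssim 2^k$ for the telescoped pieces, where the grand maximal characterization and the local $\mathbb{B}$-valued oscillation estimates do their real work; once these two points are secured, every remaining step is the scalar one with $|\cdot|$ replaced by $\|\cdot\|_{\mathbb{B}}$ and Lebesgue integrals replaced by Bochner integrals.
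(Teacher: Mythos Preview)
Your proposal is correct and follows precisely the route the paper itself indicates: the paper states this theorem without proof, remarking only that ``their proofs are similar to the scalar case, i.e., one only needs to replace the absolute value with the $\mathbb{B}$-norm in the proofs for classical cases,'' and your outline is a faithful, detailed transcription of the Str\"{o}mberg--Torchinsky weighted atomic decomposition carried out with exactly that substitution. Your identification of the grand-maximal equivalence and the uniform $L^\infty_{\mathbb{B}}$ bound on the telescoped pieces as the points requiring care is apt, but these are indeed the same checkpoints as in the scalar argument and go through with $\|\cdot\|_{\mathbb{B}}$ in place of $|\cdot|$.
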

\indent
Let $H^{p,q,s}_{\mathbb{B}}(w)$ denote the space consisting of tempered distributions admitting a decomposition 
$$f(x)=\sum_j\lambda_ja_j(x);\;\;\; \;\;\;(\lambda_j\in \ell^p),$$
where $a_i$'s are $\mathbb{B}$-valued $w-(p,q,s)$-atoms and $\sum_i |\lambda_i|^p<\infty$. For fixed functions $w$ and $f\in H^p_{\mathbb{B}}(w)$, we also set
$$\mathscr{N}_{p,q,s}(f)=\inf_{\{\lambda_i\}}\left\{\left(\sum_i|\lambda_i|^p\right)^{1/p}:f=\sum_i\lambda_ia_i \;\text{is an atomic decomposition}\right\}.$$
\begin{thm} If both triples $(p,q,N)$ and $(p,q_2,N)$ satisfy the conditions in definition of $\mathbb{B}$-valued $w$-atom, then
$$H^{p,q_q,N}_{\mathbb{B}}(w)=H^{p,q_2,N}_{\mathbb{B}}(w)$$
and, for all $q$, the gauges $\mathscr{N}_{p,q,N}(f)$ are equivalent.
\end{thm}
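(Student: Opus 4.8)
The plan is to prove the two inclusions separately: one is an immediate consequence of H\"older's inequality, and the other is obtained by routing through the maximal-function space $H^p_{\mathbb{B}}(w)$ together with Theorem~\ref{acharhpw}. Assume, without loss of generality, that the two exponents are ordered, say $1\le q_1\le q_2\le\infty$, and write $N$ for the common order of vanishing moments.

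\textbf{Easy inclusion.} First I would check that every $\mathbb{B}$-valued $w$-$(p,q_2,N)$-atom is also a $w$-$(p,q_1,N)$-atom. The support condition and the moment conditions are identical for the two triples, so only the size condition needs verification: if $a$ is supported on a cube $Q$ with $\|a\|_{L^{q_2}_{\mathbb{B}}(w)}\le w(Q)^{1/q_2-1/p}$, then H\"older's inequality on $Q$ with exponents $q_2/q_1$ and its conjugate gives $\|a\|_{L^{q_1}_{\mathbb{B}}(w)}\le\|a\|_{L^{q_2}_{\mathbb{B}}(w)}\,w(Q)^{1/q_1-1/q_2}\le w(Q)^{1/q_1-1/p}$, with the usual interpretation when $q_2=\infty$. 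Hence any atomic decomposition of $f$ by $(p,q_2,N)$-atoms is also one by $(p,q_1,N)$-atoms, so $H^{p,q_2,N}_{\mathbb{B}}(w)\subseteq H^{p,q_1,N}_{\mathbb{B}}(w)$ and $\mathscr{N}_{p,q_1,N}(f)\le\mathscr{N}_{p,q_2,N}(f)$.

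\textbf{Atomic space embeds into $H^p_{\mathbb{B}}(w)$.} The heart of the matter is the uniform bound $\|a^{\ast}\|_{L^p_{\mathbb{B}}(w)}\le C$ for every $w$-$(p,q,N)$-atom $a$, with $C$ independent of $a$. I would fix such an $a$, supported on a cube $Q$ centered at $x_0$ with side $\ell$, and split $\mathbb{R}^n=(c\sqrt{n}\,Q)\cup(c\sqrt{n}\,Q)^c$ for a suitable dilation constant $c$. On the dilate, I would use that $w\in A_q$ makes $f\mapsto\sup_{t>0}\|f\ast\phi_t\|_{\mathbb{B}}$ bounded on $L^q_{\mathbb{B}}(w)$ (via the radial majorant and the $M_1$ bound, proved exactly as in the scalar case with $\|\cdot\|_{\mathbb{B}}$ replacing $|\cdot|$), then apply H\"older, the size condition, and the doubling property $w(c\sqrt{n}\,Q)\le Cw(Q)$. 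On the complement, I would exploit the $N$ vanishing moments: subtracting the degree-$N$ Taylor polynomial of $\phi_t$ about $(x-x_0)/t$ and using $\phi\in\mathscr{S}(\mathbb{R}^n)$ gives, for $x\notin c\sqrt{n}\,Q$, a pointwise bound of the form $\|a\ast\phi_t(x)\|_{\mathbb{B}}\lesssim \ell^{\,n+N+1}\,|x-x_0|^{-(n+N+1)}\,|Q|^{-1}\int_Q\|a(y)\|_{\mathbb{B}}\,dy$; decomposing $(c\sqrt{n}\,Q)^c$ into dyadic annuli and invoking the $A_q$/reverse-H\"older estimates for $w(2^kQ)$ then sums to a constant precisely because $N\ge[n(q_w/p-1)]$. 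The Bochner identity $\int_{\mathbb{R}^n}a(x)x^{\alpha}\,dx=0\in\mathbb{B}$ is what legitimizes the term-by-term polynomial subtraction. Once $\|a^{\ast}\|_{L^p_{\mathbb{B}}(w)}\le C$ is established, for $f=\sum_j\lambda_ja_j$ I would use $0<p\le1$ and the subadditivity of $t\mapsto t^p$ to get $\|f^{\ast}\|_{L^p_{\mathbb{B}}(w)}^p\le\sum_j|\lambda_j|^p\|a_j^{\ast}\|_{L^p_{\mathbb{B}}(w)}^p\le C\sum_j|\lambda_j|^p$, so that $\|f\|_{H^p_{\mathbb{B}}(w)}\le C\,\mathscr{N}_{p,q,N}(f)$ and in particular $H^{p,q,N}_{\mathbb{B}}(w)\subseteq H^p_{\mathbb{B}}(w)$ for each admissible $q$.

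\textbf{Closing the loop.} Theorem~\ref{acharhpw} supplies, for every $f\in H^p_{\mathbb{B}}(w)$, a $(p,\infty,N)$-atomic decomposition with $\bigl(\sum_j|\lambda_j|^p\bigr)^{1/p}\le C\|f\|_{H^p_{\mathbb{B}}(w)}$, i.e. $\mathscr{N}_{p,\infty,N}(f)\le C\|f\|_{H^p_{\mathbb{B}}(w)}$, so $H^p_{\mathbb{B}}(w)\subseteq H^{p,\infty,N}_{\mathbb{B}}(w)$. Starting from an arbitrary $f\in H^{p,q_1,N}_{\mathbb{B}}(w)$ and chaining the three facts above (the embedding of the second step with $q=q_1$, Theorem~\ref{acharhpw}, and the easy inclusion applied with exponents $q_2\le\infty$) yields, for any admissible $q_1,q_2$,
$$\mathscr{N}_{p,q_1,N}(f)\le\mathscr{N}_{p,q_2,N}(f)\le\mathscr{N}_{p,\infty,N}(f)\le C\|f\|_{H^p_{\mathbb{B}}(w)}\le C'\,\mathscr{N}_{p,q_1,N}(f),$$
which shows $H^{p,q_1,N}_{\mathbb{B}}(w)=H^{p,q_2,N}_{\mathbb{B}}(w)$ (both coinciding with $H^p_{\mathbb{B}}(w)$) and that all the gauges $\mathscr{N}_{p,q,N}$ are equivalent. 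I expect the single-atom maximal estimate of the second step to be the main obstacle: the delicate point is arranging the far-field dyadic-annulus summation so that the decay $\ell^{\,n+N+1}|x-x_0|^{-(n+N+1)}$ dominates the growth of $w(2^kQ)/w(Q)$, which is exactly where the condition $N\ge[n(q_w/p-1)]$ and the $A_q$ and reverse-H\"older inequalities enter; all remaining steps are the scalar arguments with $\|\cdot\|_{\mathbb{B}}$ in place of $|\cdot|$ and Bochner integrals in place of Lebesgue integrals.
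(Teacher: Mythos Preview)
The paper does not actually prove this statement: immediately before it, the authors write that they ``state the following few theorems without proof since their proofs are similar to the scalar case, i.e., one only needs to replace the absolute value with the $\mathbb{B}$-norm in the proofs for classical cases.'' Your proposal is precisely a correct execution of that recipe---the standard scalar argument (H\"older for the easy inclusion, the near/far maximal estimate for a single atom, and Theorem~\ref{acharhpw} to close the loop) with $\|\cdot\|_{\mathbb{B}}$ and Bochner integrals in place of $|\cdot|$ and Lebesgue integrals---so your approach is exactly what the paper has in mind.

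One small remark: $a^{\ast}(x)=\sup_{t>0}\|a\ast\phi_t(x)\|_{\mathbb{B}}$ is scalar-valued, so you should write $\|a^{\ast}\|_{L^p(w)}$ rather than $\|a^{\ast}\|_{L^p_{\mathbb{B}}(w)}$; and note that your chain of inequalities in fact establishes the stronger Theorem~4 (coincidence with $H^p_{\mathbb{B}}(w)$), of which the present statement is a corollary.
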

%%%%%%%%%%%%%%%
\begin{thm}For $0<p\leq 1\leq q\leq\infty$ and $p\neq q$, every $\mathbb{B}$-valued  $(p,q,N)$-atom with respect to $w$ is in $H^p_{\mathbb{B}}(w)$, and its $H^p_{\mathbb{B}}(w)$-norm is bounded by a constant independent of the atom.
\end{thm}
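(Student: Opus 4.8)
The plan is to transcribe the classical proof that $(p,q,s)$-atoms lie in $H^p$ into the $\mathbb{B}$-valued weighted setting; the only genuinely vector-valued ingredient is the elementary inequality $\big\|\int_Q\phi_t(x-y)a(y)\,dy\big\|_{\mathbb{B}}\le\int_Q|\phi_t(x-y)|\,\|a(y)\|_{\mathbb{B}}\,dy$, which allows every estimate to be run for the scalar function $\|a(\cdot)\|_{\mathbb{B}}$. Fix a $\mathbb{B}$-valued $w$-$(p,q,N)$-atom $a$ supported in a cube $Q=Q(x_0,\ell)$, put $Q^{\ast}=2\sqrt n\,Q$, and split
$$\|a\|_{H^p_{\mathbb{B}}(w)}^p=\int_{\mathbb{R}^n}\big(a^{\ast}(x)\big)^p w(x)\,dx=\int_{Q^{\ast}}\big(a^{\ast}\big)^p w+\int_{\mathbb{R}^n\setminus Q^{\ast}}\big(a^{\ast}\big)^p w .$$
Each term will be bounded by a constant depending only on $n,p,q,N$ and the $A_q$-constant of $w$.

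For the local term, since $\phi\in\mathscr S(\mathbb{R}^n)$ has a radially decreasing integrable majorant, taking norms inside the Bochner integral gives the pointwise bound $a^{\ast}(x)\le C\,M\big(\|a(\cdot)\|_{\mathbb{B}}\big)(x)$, where $M$ is the Hardy--Littlewood maximal operator. If $q>1$, I would apply Hölder on $Q^{\ast}$ with exponent $q/p$, then use the boundedness of $M$ on $L^q(w)$ (valid because $w\in A_q$), atom condition (ii), and the doubling of $w$:
$$\int_{Q^{\ast}}\big(a^{\ast}\big)^p w\le\Big(\int_{\mathbb{R}^n}\big(a^{\ast}\big)^q w\Big)^{p/q}w(Q^{\ast})^{1-p/q}\le C\Big(\int_{\mathbb{R}^n}\|a\|_{\mathbb{B}}^q w\Big)^{p/q}w(Q)^{1-p/q}\le C\,w(Q)^{\frac pq-1}\,w(Q)^{1-\frac pq}=C .$$
When $q=1$ (so $p<1$) the same bound follows from Kolmogorov's inequality applied to $M$, which is weak-$(1,1)$ with respect to the measure $w\,dx$ since $w\in A_1$; the case $q=\infty$ is immediate from $\|a\|_{L^{\infty}_{\mathbb{B}}}\le w(Q)^{-1/p}$ together with the doubling of $w$.

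For the tail term I would use the vanishing moments, condition (iii) with $s=N$: for $x\notin Q^{\ast}$ write $a\ast\phi_t(x)=\int_Q\big[\phi_t(x-y)-P_N(x,y,t)\big]a(y)\,dy$, where $P_N$ is the degree-$N$ Taylor polynomial of $y\mapsto\phi_t(x-y)$ about $x_0$; the Taylor remainder, the rapid decay of $\phi$, and an optimization over $t>0$ then yield
$$a^{\ast}(x)\le C\,\frac{\ell^{\,N+1}}{|x-x_0|^{\,n+N+1}}\int_Q\|a(y)\|_{\mathbb{B}}\,dy\qquad(x\notin Q^{\ast}).$$
A weighted Hölder inequality, using the $A_q$ condition for $1<q<\infty$ (respectively the $A_1$ condition, respectively $\|a\|_{L^{\infty}_{\mathbb{B}}}\le w(Q)^{-1/p}$ when $q=\infty$), gives $\int_Q\|a\|_{\mathbb{B}}\,dy\le C\,|Q|\,w(Q)^{-1/p}$, hence $a^{\ast}(x)\le C\,\ell^{\,n+N+1}|x-x_0|^{-(n+N+1)}w(Q)^{-1/p}$ off $Q^{\ast}$. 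Decomposing $\mathbb{R}^n\setminus Q^{\ast}$ into the dyadic annuli $2^{k+1}Q^{\ast}\setminus 2^{k}Q^{\ast}$ and invoking the growth estimate $w(2^{k}Q)\le C_{\varepsilon}\,2^{kn(q_w+\varepsilon)}w(Q)$ for $A_q$ weights, the resulting series $\sum_{k\ge0}2^{k[n(q_w+\varepsilon)-p(n+N+1)]}$ converges, because $N\ge[n(q_w/p-1)]$ forces $N+1>n(q_w/p-1)$, i.e. $p(n+N+1)>nq_w$, and $\varepsilon$ may be chosen small; this gives $\int_{\mathbb{R}^n\setminus Q^{\ast}}\big(a^{\ast}\big)^p w\le C$. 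Adding the two pieces proves $\|a\|_{H^p_{\mathbb{B}}(w)}\le C$ with $C$ independent of $a$.

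The step I expect to need the most care is the tail estimate together with its bookkeeping: extracting the correct decay $|x-x_0|^{-(n+N+1)}$ from the Taylor remainder uniformly in $t$, and then matching the admissible moment order $s=N\ge[n(q_w/p-1)]$ against the exponent required for the annular geometric series to sum. Everything else is a routine adaptation of the scalar weighted theory, the only genuinely $\mathbb{B}$-valued input being the norm-inside-the-integral inequality noted at the outset.
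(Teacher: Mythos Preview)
Your proposal is correct and matches the paper's stated approach: the paper omits the proof of this theorem entirely, remarking only that it is ``similar to the scalar case, i.e., one only needs to replace the absolute value with the $\mathbb{B}$-norm in the proofs for classical cases,'' and your argument is precisely that transcription of the standard weighted scalar proof (local piece via Hardy--Littlewood maximal control and H\"older/Kolmogorov, tail piece via Taylor remainder and dyadic annuli) with the single vector-valued input $\big\|\int\phi_t\,a\big\|_{\mathbb{B}}\le\int|\phi_t|\,\|a\|_{\mathbb{B}}$. Your bookkeeping on the moment order $N\ge[n(q_w/p-1)]$ against the annular exponent is also the correct way to close the series.
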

%%%%%%%%%%%%
\begin{thm}All spaces $H^{p,q,s}_{\mathbb{B}}(w)$ coincide with $H^p_{\mathbb{B}}(w)$ and $\mathscr{N}_{p,q,s}(f)\approx \|f\|_{H^p_{\mathbb{B}}(w)}$ provided that the triple $(p,q,s)$ satisfies the conditions in the definition of $\mathbb{B}$-valued $w$-atom.
\end{thm}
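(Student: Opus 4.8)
The plan is to deduce the identification $H^{p,q,s}_{\mathbb{B}}(w)=H^p_{\mathbb{B}}(w)$ together with $\mathscr{N}_{p,q,s}(f)\approx\|f\|_{H^p_{\mathbb{B}}(w)}$ by assembling the theorems stated just above, proving the two inclusions separately and then removing the dependence on $q$ and $s$. \emph{First inclusion:} $H^p_{\mathbb{B}}(w)\subseteq H^{p,q,N}_{\mathbb{B}}(w)$. Given $f\in H^p_{\mathbb{B}}(w)$, Theorem~\ref{acharhpw} provides $\mathbb{B}$-valued $(p,\infty,N)$-atoms $a_j$ and scalars $\lambda_j$ with $f=\sum_j\lambda_ja_j$ in $\mathscr{S}'(\mathbb{R}^n)$ and in $H^p_{\mathbb{B}}(w)$, and $\sum_j|\lambda_j|^p\le C\|f\|^p_{H^p_{\mathbb{B}}(w)}$. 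If $a$ is a $(p,\infty,N)$-atom supported on a cube $Q$, then
$$\|a\|^q_{L^q_{\mathbb{B}}(w)}=\int_Q\|a(x)\|^q_{\mathbb{B}}w(x)\,dx\le\|a\|^q_{L^\infty_{\mathbb{B}}}\,w(Q)\le w(Q)^{1-q/p},$$
so $\|a\|_{L^q_{\mathbb{B}}(w)}\le w(Q)^{1/q-1/p}$, and $a$ keeps the vanishing moments up to order $N$; hence $a$ is a $(p,q,N)$-atom, $f\in H^{p,q,N}_{\mathbb{B}}(w)$, and taking the infimum over such decompositions gives $\mathscr{N}_{p,q,N}(f)\le C^{1/p}\|f\|_{H^p_{\mathbb{B}}(w)}$.

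\emph{Second inclusion:} $H^{p,q,N}_{\mathbb{B}}(w)\subseteq H^p_{\mathbb{B}}(w)$. Let $f=\sum_j\lambda_ja_j$ be any decomposition into $\mathbb{B}$-valued $w$-$(p,q,N)$-atoms. Since convolution with $\phi_t$ is continuous on $\mathscr{S}'(\mathbb{R}^n)$ we have $f\ast\phi_t(x)=\sum_j\lambda_j\,a_j\ast\phi_t(x)$ in $\mathbb{B}$, so the triangle inequality in $\mathbb{B}$ followed by the supremum over $t>0$ yields the pointwise bound $f^\ast(x)\le\sum_j|\lambda_j|\,a_j^\ast(x)$. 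Using $\bigl(\sum_jc_j\bigr)^p\le\sum_jc_j^p$ for $c_j\ge0$ and $0<p\le1$, and then the theorem asserting that every $\mathbb{B}$-valued $(p,q,N)$-atom has $H^p_{\mathbb{B}}(w)$-norm bounded by an absolute constant $C$, we get
$$\|f\|^p_{H^p_{\mathbb{B}}(w)}=\int_{\mathbb{R}^n}f^\ast(x)^p w(x)\,dx\le\sum_j|\lambda_j|^p\int_{\mathbb{R}^n}a_j^\ast(x)^p w(x)\,dx\le C^p\sum_j|\lambda_j|^p.$$
Infimizing over decompositions gives $\|f\|_{H^p_{\mathbb{B}}(w)}\le C\,\mathscr{N}_{p,q,N}(f)$; in particular the partial sums form a Cauchy sequence in the complete space $H^p_{\mathbb{B}}(w)$, converge there, and the limit coincides with $f$, which already converges in $\mathscr{S}'$. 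The two inclusions show $H^{p,q,N}_{\mathbb{B}}(w)=H^p_{\mathbb{B}}(w)$ with equivalent gauges; invoking the $q$-independence theorem stated above removes the restriction $q=\infty$ from the first step, and since the decomposition theorem holds, with the same scalar proof, for atoms of any admissible order $s\ge[n(q_w/p-1)]$ (a $(p,q,s)$-atom being in particular a $(p,q,N)$-atom), we conclude $H^{p,q,s}_{\mathbb{B}}(w)=H^p_{\mathbb{B}}(w)$ and $\mathscr{N}_{p,q,s}(f)\approx\|f\|_{H^p_{\mathbb{B}}(w)}$ for every admissible triple $(p,q,s)$.

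\emph{Main obstacle.} Everything above is the scalar argument with $|\cdot|$ replaced by $\|\cdot\|_{\mathbb{B}}$, except for the one genuinely non-formal point: the interchange of $\phi_t\ast(\cdot)$ with the infinite sum and the ensuing domination $f^\ast\le\sum_j|\lambda_j|\,a_j^\ast$, which requires that $\sum_j\lambda_ja_j$ converge in $\mathscr{S}'$ uniformly over admissible atomic decompositions. This in turn uses the atom size bound (ii), the doubling property of $w\in A_\infty$ to compare $w(Q)$ with powers of $|Q|$, and the fact that a countable sum of strongly measurable $\mathbb{B}$-valued functions is strongly measurable — so that $f^\ast$ is a well-defined measurable scalar function to which the $L^p(w)$ estimates apply.
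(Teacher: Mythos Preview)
The paper does not actually prove this theorem: it is one of the results the author explicitly ``state[s] \ldots\ without proof since their proofs are similar to the scalar case, i.e., one only needs to replace the absolute value with the $\mathbb{B}$-norm.'' Your proposal carries out precisely that program---assembling the surrounding Theorems~1--3 into the two inclusions and handling the $q$- and $s$-independence---so it is correct and in full agreement with the paper's stated approach.
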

\begin{defn}Let $\mathbb{B}$  be a Banach space. For $0<p\leq 1\leq q\leq\infty$ and $p\neq q$, let $w\in A_q$ with critical index $q_w$ and critical index $r_w$ for the reverse H\"older condition. Set $s\geq N$, $\epsilon >\max\{sr_w(r_w-1)^{-1}n^{-1}+(r_w-1)^{-1}, 1/p-1\}$, $a=1-1/p+\epsilon$, and $b=1-1/p+\epsilon$. A $\mathbb{B}$-valued $(p,q,s,\epsilon)$-molecule centered at $x_0$ with respect to $w$ (or $w-(p,q,s,\epsilon )$-molecule centered at $x_0$) is a function $M\in L^q_{\mathbb{B}}(w)$ satisfying
%%%%%%%%%%%%%%%%
\begin{enumerate}[label=\upshape(\roman*), leftmargin=*, widest=iii]
\item $M(x)\cdot w(I^{x_0}_{|x-x_0|})^b\in L^q_{\mathbb{B}}(w)$,
\item $\|M\|^{a/b}_{L^q_{\mathbb{B}}(w)}\cdot \|M(x)\cdot w(I^{x_0}_{|x-x_0|})^b\|^{1-a/b}_{L^q_{\mathbb{B}}(w)}\equiv \mathscr{R}_w(M)<\infty$,
\item $\int_{\mathbb{R}^n}x^{\alpha}\, dx=0$ for every multi-index $\alpha$ with $|\alpha|\leq s$.
\end{enumerate}
\end{defn}
%%%%%%%%%%%%%%%%%%%%%%
In the above definition $\mathscr{R}_w(M)$ is called the molecular norm of $M$ with respect to $w$ (or $w$-molecular norm of $M$). If $w(x)\equiv$ constant, then $q_w=1$ and $r_w=\infty$.\\
%%%%%%%%%%%%

\section{The Results}
Let $\mathbb{A}$ and $\mathbb{B}$ be Banach spaces, and $T$ be a convolution operator mapping $\mathbb{A}$-valued functions into $\mathbb{B}$-valued functions, i.e.,
$$Tf(x)=\int_{\mathbb{R}^n}K(x-y)\cdot f(y)\, dy,$$
where $K$ is a strongly measurable function defined on $\mathbb{R}^n$ such that $\|K(x)\|_{\mathbb{B}}$ is locally integrable away from the origin.\\
\indent
The following theorem is our first result:
\begin{thm}\label{wh1thm}Let $\mathbb{A}$ and $\mathbb{B}$ be Banach spaces, and $T$ be a convolution operator mapping $\mathbb{A}$-valued functions into $\mathbb{B}$-valued functions. Suppose that $w$ is a positive weight function defined on $\mathbb{R}^n$, and that 
\begin{enumerate}[label=\upshape(\roman*), leftmargin=*, widest=iii]
\item\label{t5i} For some $q\in [1, \infty ]$, there exists a positive constant $C_1$ such that
$$\int_{\mathbb{R}^n}\|Tf(x)\|^q_{\mathbb{B}}w(x)\, dx\leq C_1\int_{\mathbb{R}^n}\|f(x)\|_{\mathbb{A}}^q w(x)\,dx$$
for all $f\in L^q_{\mathbb{A}}(\mathbb{R}^n)$.
\item\label{t5ii} There exists a positive constant $C_2$ independent of $y\in\mathbb{R}^n$ such that
$$\int_{|x|>2|y|}\|K(x-y)-K(x)\|_{\mathbb{B}}\, dx\leq C_2.$$
\end{enumerate}
Then there exists a positive constant $C_3$ such that
$$\|Tf\|_{L^1_{\mathbb{B}}(w)}\leq C_3\|f\|_{H^1_{\mathbb{A}}(w)}$$
 for all $f\in H^1_{\mathbb{A}}(w)$
\end{thm}
\begin{proof}Given a ball $U=U(x_0; R)$ in $\mathbb{R}^n$ with center $x_0$ and radius $R$, and denoting by $\widetilde{U}$ the double ball, $\widetilde{U}=U(x_0;2R)$, we first claim that

$$\int_{\mathbb{R}^n-\widetilde{U}}\|Tf(x)\|_{\mathbb{B}}w(x)\, dx\leq C\|f\|_{L^1_{\mathbb{A}}(w)}$$
for every $f\in L_{\mathbb{A}}^1(w)$ supported in $U$ such that $\int f(x)\, dx=0$. But, for such a function $f$,
$$Tf(x)=\int_U\{K(x-y)-K(x-x_0)\}\cdot f(y)\, dy\;\;\; \;\; (x\in \widetilde{U} )$$
and therefore
\begin{align*}
\int_{\mathbb{R}^n-\widetilde{U}}\|Tf(x)\|_{\mathbb{B}}w(x)\, dx\hspace{9cm} \\
\leq \int\int_{|x-x_0|\geq 2R>2|y-x_0|}\|\{K(x-y)-K(x-x_0)\}\cdot f(y)\|_{\mathbb{B}}\, dy w(x)\, dx\\
\leq C\int _{|y-x_0|<R}\|f(y)\|_{\mathbb{A}}w(y)\, dy,\hspace{6.7cm}
\end{align*}
which proves our claim.\\
Let now $a$ be an $\mathbb{A}$-valued atom with supporting cube $Q$, and let $U$ be the smallest ball containing $Q$, and $\widetilde{Q}$ as before. Then there exits  a constant $C_1>0$ such that
$$\int_{\mathbb{R}^n-\widetilde{U}}\|Ta(x)\|_{\mathbb{B}}w(x)\, dx\leq C_1.$$
On the other hand, since
$$\int_{\mathbb{R}^n}\|Ta(x)\|^q_{\mathbb{B}}w(x)\, dx\leq C_2\int_{\mathbb{R}^n}\|a(x)\|_{\mathbb{A}}^qw(x)\,dx,$$
we have
\begin{align*}
\int_{\widetilde{U}}\|Ta(x)\|_{\mathbb{B}}w(x)\, dx& \leq C_3 \|a(x)\|_{L^q_A(w)}(C_nw(Q))^{1/q^{\prime}}\\
&\leq \text{Constant.}
\end{align*}
\end{proof}
Our second result is the following:
\begin{thm}\label{h1l1bw} Let $w\in A_1$. Assume that $K\in L_{\rm{loc}}(\mathbb{R}^n\backslash \{0\})$ satisfies
$$\|K\ast f\|_{L^2_{\mathbb{B}}(w)}\leq C_1\|f\|_{L^2_{\mathbb{A}}(w)}$$
and

$$\int_{|x|\geq C_2|y|}\|K(x-y)-K(x)\|_{\mathbb{B}}w(x+h)\, dx\leq C_3w(y+h)\;\;\;(\forall y\neq 0, \forall h\in\mathbb{R}^n)
$$
for certain absolute constants $C_1$, $C_2$, and $C_3$. Then there exists a constant $C$ independent of $f$ such that
$$\|K\ast f\|_{L^1_{\mathbb{B}}(w)}\leq C\|f\|_{H^1_{\mathbb{A}}(w)}$$
for all $f\in H^1_{\mathbb{A}}(w)$.
\end{thm}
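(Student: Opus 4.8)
The plan is to reduce the estimate to a uniform bound on atoms and then sum. By Theorem~\ref{acharhpw} (with $p=1$) one has, for $f\in H^1_{\mathbb{A}}(w)$, a decomposition $f=\sum_j\lambda_j a_j$ with $\sum_j|\lambda_j|\le C\|f\|_{H^1_{\mathbb{A}}(w)}$, where each $a_j$ is an $\mathbb{A}$-valued $w$-$(1,\infty,N)$-atom; since for such an atom $\|a_j\|_{L^2_{\mathbb{A}}(w)}\le\|a_j\|_{L^\infty_{\mathbb{A}}}w(Q_j)^{1/2}\le w(Q_j)^{-1/2}$, it is also (up to an absolute constant) a $w$-$(1,2,N)$-atom, which is the form convenient for exploiting the $L^2(w)$ hypothesis. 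So it suffices to show $\|K\ast a\|_{L^1_{\mathbb{B}}(w)}\le C$ for an arbitrary $\mathbb{A}$-valued $w$-$(1,2,N)$-atom $a$ with supporting cube $Q$ centered at $x_0$; let $U=U(x_0;R)$ be the smallest ball containing $Q$ and put $\widetilde U=U(x_0;C_2 R)$, with $C_2$ the constant from the hypothesis. I would split $\|K\ast a\|_{L^1_{\mathbb{B}}(w)}$ into the integral over $\widetilde U$ and the integral over $\mathbb{R}^n\setminus\widetilde U$, handled by different means.

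For the local part, the Cauchy--Schwarz inequality with respect to the measure $w\,dx$ gives
$$\int_{\widetilde U}\|K\ast a(x)\|_{\mathbb{B}}w(x)\, dx\le\left(\int_{\widetilde U}\|K\ast a(x)\|^2_{\mathbb{B}}w(x)\, dx\right)^{1/2}w(\widetilde U)^{1/2}\le C_1\|a\|_{L^2_{\mathbb{A}}(w)}\,w(\widetilde U)^{1/2},$$
using the $L^2(w)$ boundedness of $K\ast$. The size condition on the atom bounds the first factor by $C_1 w(Q)^{-1/2}$, and since $w\in A_1\subset A_\infty$ is a doubling weight one has $w(\widetilde U)\le C\,w(Q)$; multiplying, the local part is bounded by an absolute constant.

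For the part away from $\widetilde U$, the vanishing moment $\int a=0$ lets me write $K\ast a(x)=\int_Q\{K(x-y)-K(x-x_0)\}\cdot a(y)\,dy$, whence by Tonelli
$$\int_{\mathbb{R}^n\setminus\widetilde U}\|K\ast a(x)\|_{\mathbb{B}}w(x)\, dx\le\int_Q\|a(y)\|_{\mathbb{A}}\Bigl(\int_{\mathbb{R}^n\setminus\widetilde U}\|K(x-y)-K(x-x_0)\|_{\mathbb{B}}w(x)\, dx\Bigr)dy.$$
In the inner integral I substitute $x=x_0+u$ and $y=x_0+v$; since $|v|\le R$ and $x\notin\widetilde U$ forces $|u|\ge C_2R\ge C_2|v|$, the domain is contained in $\{|u|\ge C_2|v|\}$, and the weighted H\"ormander hypothesis applied with $h=x_0$ bounds the inner integral by $C_3\,w(x_0+v)=C_3\,w(y)$. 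Hence this part is at most $C_3\int_Q\|a(y)\|_{\mathbb{A}}w(y)\,dy=C_3\|a\|_{L^1_{\mathbb{A}}(w)}\le C_3$ by the $\mathbb{A}$-valued analogue of the Lemma above (with $p=1$). Combining the two parts gives $\|K\ast a\|_{L^1_{\mathbb{B}}(w)}\le C$ uniformly in $a$; then $\sum_j|\lambda_j|\,\|K\ast a_j\|_{L^1_{\mathbb{B}}(w)}<\infty$, so $\sum_j\lambda_j(K\ast a_j)$ converges in $L^1_{\mathbb{B}}(w)$, equals $K\ast f$, and has norm $\le C\sum_j|\lambda_j|\le C\|f\|_{H^1_{\mathbb{A}}(w)}$.

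The analytic core — the two-piece atom estimate — is robust; the only care needed there is to match the dilation factor defining $\widetilde U$ to the constant $C_2$ in the hypothesis and to keep track of the doubling constant of $w$. The step I expect to require the most care is the last one: making precise that $K\ast f$ is genuinely well defined for an arbitrary $f\in H^1_{\mathbb{A}}(w)$ and coincides with $\sum_j\lambda_j(K\ast a_j)$, since a priori the convolution is unambiguous only on sufficiently nice functions. The clean way around this is to define $T=K\ast(\cdot)$ first on the dense subspace of finite atomic combinations, verify that the value is independent of the chosen decomposition, and then extend by continuity using exactly the uniform atomic bound just established.
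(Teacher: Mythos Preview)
Your argument is correct and follows essentially the same route as the paper: reduce via the atomic decomposition to a uniform $L^1_{\mathbb{B}}(w)$ bound on a single atom, then split into a local piece handled by Cauchy--Schwarz, the $L^2(w)$ hypothesis, and doubling, and a far piece handled by the cancellation $\int a=0$ together with the weighted H\"ormander condition. The one cosmetic difference is in how a general center $x_0$ is absorbed: the paper first proves the estimate for atoms centered at the origin (using the H\"ormander hypothesis with $h=0$) and then translates, replacing $w$ by $w_1(x)=w(x-x_0)$ and invoking the hypotheses for $w_1$; you instead work directly with an atom centered at $x_0$, substitute $x=x_0+u$, $y=x_0+v$, and apply the H\"ormander hypothesis with $h=x_0$ --- which is exactly the same use of the shift parameter, just not routed through an explicit translation.
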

\begin{proof} Because of the atomic decomposition of a function in $H^1_{\mathbb{B}}(w)$, it suffices to show that
$$\|K\ast a\|_{L^1_{\mathbb{B}}(w)}\leq C$$
for any $\mathbb{B}$-valued $w-(1,2,0)$-atom $a$ with constant $C$ independent of the choice of $a$. Let us first consider a weighted $1$-atom $a$ centered at $0$ with support $\textrm{supp}(f)\subset I_R$, we have 
$$\|a\|_{L^2_{\mathbb{B}}(w)}\leq w(I_R)^{-1/2}$$
and
$$\int_{I_R}a(x)\, dx=0.$$
Thus, we have
\begin{align*}
\int_{|x|\geq C_2\sqrt{n}R}\|K\ast a(x)\|_{\mathbb{B}}w(x)\, dx \hspace{7cm}\\
=\int_{|x|\geq C_2\sqrt{n}R}\left\|\int_{I_R}\{K(x-y)-K(x)\}a(y)\, dy\right\|_{\mathbb{B}}w(x)\, dx\hspace{1cm}\\
\leq \int_{I_R}\|a(y)\|_{\mathbb{B}}\, dy\int_{|x|\geq C_2|y|}\|K(x-y)-K(x)\|_{\mathbb{B}}w(x)\, dx\hspace{1.1cm}\\
\leq C_3\int_{I_R}\|a(y)\|_{\mathbb{B}}w(y)\, dy\hspace{6.5cm}\\
\leq C_3.\hspace{9.8cm}
\end{align*}
Furthermore, we have by Schwarz's inequality and the doubling condition,
\begin{align*}
\int_{|x|<C_2\sqrt{n}R}\|K\ast a(x)\|_{\mathbb{B}}w(x)\, dx&\leq \|K\ast a\|_{L^2_{\mathbb{B}}(w)}\left(\int_{|x|<C_2\sqrt{n}R}w(x)\, dx\right)^{1/2}\\
&\leq C_1\|a\|_{L^2_{\mathbb{B}}(w)}w(C_2\sqrt{n}I_R)^{1/2}\\
&\leq C.
\end{align*}
So in both cases for any $\mathbb{B}$-valued $w-(1,2,0)$-atom $a$ centered at the origin we have obtained
$$\|K\ast a\|_{L^1_{\mathbb{B}}(w)}\leq C.$$
\indent
Let now $a$ be a $\mathbb{B}$-valued $w-(1,2,0)$-atom centered t $x_0\in\mathbb{R}^n$. Then $b(x)=a(x-x_0)$ is a $\mathbb{B}$-valued $w_1-(1,2,0)$-atom centered  at 0, where $w_1(x)=w(x-x_0)\in A_1$. Furthermore, $K$ satisfies
$$\|K\ast b\|_{L^2_{\mathbb{B}}(w_1)}\leq C_1\|b\|_{L^2_{\mathbb{B}}(w_1)}$$
and
$$\int_{|x|\geq C_2|y|}\|K(x-y)-K(x)\|_{\mathbb{B}}w_1(x)\leq C_3w_1(y)\;\;\;\; (\forall y\neq 0).$$
Thus, we have as above
$$\|K\ast b\|_{L^1_{\mathbb{B}}(w_1)}\leq C.$$
Hence, we obtain
$$\|K\ast a\|_ {L^1_{\mathbb{B}}(w)}= \|K\ast b\|_{L^1_{\mathbb{B}}(w_1)}\leq C$$
as desired.
\end{proof}
\section{An Application}
Let $f$ be a measurable functions defined on $\mathbb{R}$, and for each $n\in\mathbb{Z}$ define the averaging operator
$$A_nf(x)=\frac{1}{2^n}\int_{x}^{x+2^n}f(y)\, dy.$$
Consider the variation operator
$$\mathcal{V}f(x)=\left(\sum_{-\infty}^\infty |A_nf(x)-A_{n-1}f(x)|^s\right)^{1/s}$$
for $2\leq s <\infty$.\\
Given a locally integrable function $f$ we define the sequence-valued operator $T$ as follows:
\begin{align*}
Tf(x)&=\{A_nf(x)-A_{n-1}f(x)\}_n\\
&=\left\{\int_{\mathbb{R}}\left(\frac{1}{2^n}\chi_{(-2^n,0)}(x-y)-\frac{1}{2^{n-1}}\chi_{(-2^{n-1},0)}(x-y)\right)f(y)\, dy\right\}_n\\
&=\int_{\mathbb{R}}K(x-y)\cdot f(y)\, dy,
\end{align*}
where $K$ is the sequence-valued function
$$K(x)=\left\{\frac{1}{2^n}\chi_{(-2^n,0)}(x)-\frac{1}{2^{n-1}}\chi_{(-2^{n-1},0)}(x)\right\}_{n\in\mathbb{Z}}=\left\{K_n(x)\right\}_{n\in\mathbb{Z}}.$$
It is clear that 
$$\|Tf(x)\|_{\ell^s(\mathbb{Z})}=\mathcal{V}f(x).$$
It is proven in Lemma 1 of S. Demir~\cite{sdemir3} that $K$ satisfies the $D_r$ condition for $r\geq 1$. For $r=1$ this condition is equivalent to Theorem~\ref{wh1thm} \ref{t5ii} known as H\"ormander condition with $\mathbb{B}=\ell^s(\mathbb{Z})$ for $s\geq 2$.\\
Also, Theorem 2 of S. Demir~\cite{sdemir3} shows that Theorem~\ref{wh1thm} \ref{t5i} is satisfied for $1\leq q<\infty$ with the absolute value as $\mathbb{A}$. This shows  that  Theorem~\ref{wh1thm} can be applied to $Tf$, and thus for $s\geq 2$ there exists a positive constant $C$ such that
$$\|Tf\|_{L^1_{\ell^s(\mathbb{Z})}(w)}=\|\mathcal{V}f\|_{L^1(w)}\leq C\|f\|_{H^1(w)}$$
for all $f\in H^1(w)$.

%%%%%%%%%%%%%%%%%%%%%%%%%%%%%%%%%%%%

\end{document}